\newtheorem{theorem}{Theorem}
\newtheorem{corollary}{Corollary}
\newtheorem{statement}{Statement}
\newcommand{\conv}{\mathop{\rm conv}\nolimits}
\newcommand{\Vol}{\mathop{\rm Vol}\nolimits}
\newcommand{\RR}{{\mathbb R}}
\newcommand{\NN}{{\mathbb N}}
\newcommand{\PP}{{\mathsf P}}
\begin{document}

\title{Linear and Fisher Separability
of Random Points in~the~$d$-dimensional Spherical Layer
\thanks{The work is supported by the Ministry of Education and Science of Russian Federation 
(project 14.Y26.31.0022).}}

\author{\IEEEauthorblockN{S.\,V.\,Sidorov \qquad N.\,Yu.\,Zolotykh}
\IEEEauthorblockA{\textit{Institute of Information Technologies, Mathematics and Mechanics} \\
\textit{Lobachevsky State University of Nizhni Novgorod}\\
Nizhni Novgorod, Russia \\
Email: sergey.sidorov@itmm.unn.ru,
nikolai.zolotykh@itmm.unn.ru}}


\maketitle

\begin{abstract}
Stochastic separation theorems play important role in high-dimensional data analysis and machine learning. It turns out that in high dimension any point of a random set of points can be separated from other points by a hyperplane with high probability even if the number of points is exponential in terms of dimension. This and similar facts can be used for constructing correctors for artificial intelligent systems, for determining an intrinsic dimension of data and for explaining various natural intelligence phenomena.
In this paper, we refine the estimations for the number of points and for the probability in stochastic separation theorems, thereby strengthening some results obtained earlier. We propose the boundaries for linear and Fisher separability, when the points are drawn randomly, independently and uniformly from a $d$-dimensional spherical layer.
These results allow us to better outline the applicability limits of the stochastic separation theorems in applications.
\end{abstract}

\begin{IEEEkeywords}
stochastic separation theorems,
random points, $1$-convex set, linear separability, Fisher separability, Fisher linear discriminant
\end{IEEEkeywords}

\section{Introduction}

Recently, stochastic separation theorems \cite{GorbanTyukin2017} have been widely used in machine learning for constructing correctors and ensembles of correctors of artificial intelligence systems \cite{GorbanGrechukTyukin2018,GorbanGolubkovGrechukMirkesTyukin2018}, for determining the intrinsic dimension of data sets \cite{AlberganteBacZinovyev2019} and for explaining various natural intelligence phenomena, such as grandmother's neuron \cite{GorbanMakarovTyukin2019} etc.

If the dimension of the data is high, then any sample of the data set can be separated from all other samples by a hyperplane (or even Fisher discriminant -- as a special case) with a probability close to $1$ even the number of samples is exponential in terms of dimension. So, high-dimensional datasets exhibit fairly simple geometric properties. Due to the applications mentioned above the theorems of such kind can be considered as a manifestation of so called the {\em blessing of dimensionality} phenomenon \cite{GorbanTyukin2017,GorbanMakarovTyukin2020}.

In its usual form a stochastic separation theorem is formulated as follows.
A random $n$-element set in $\RR^d$
is linearly separable with probability $p>1-\vartheta$, if $n < a e^{b d}$. The exact form of the exponential function depends on the probability distribution that determines how the random set is drawn, and on the constant $\vartheta$ ($0<\vartheta<1$). In particular, uniform distributions with different support are considered in \cite{GorbanTyukin2017,GorbanBurtonTyukin2019,SidorovZolotykh2019}. Wider classes of distributions (including non-i.i.d.) are considered in \cite{GorbanGrechukTyukin2018}. Roughly speaking, these classes consist of distributions without sharp peaks in sets with exponentially small volume. Estimates for product distributions in the cube and the standard normal distribution is obtained in \cite{Grechuk2019}.

We note that there are many algorithms for constructing a functional separating a point from all other points in a data set
(Fisher linear discriminant, linear programming algorithm, support vector machine, Rosenblatt perceptron etc.). 
Among all these methods the computationally cheapest is Fisher discriminant \cite{GorbanGolubkovGrechukMirkesTyukin2018}. Other advantages of the Fisher discriminant are its simplicity and the robustness.

The papers \cite{GorbanTyukin2017,GorbanBurtonTyukin2019,GorbanGrechukTyukin2018,GorbanGolubkovGrechukMirkesTyukin2018}
deal with only Fisher separability, whereas \cite{SidorovZolotykh2019} considered a (more general) linear separability. A comparison of the estimations for linear and Fisher separability allows us to clarify the applicability boundary of these methods, namely, to answer the question, for what $d$ and $n$ it suffices to use only Fisher separability and there is no need to search a more sophisticated linear discriminant.

In \cite{SidorovZolotykh2019} there were obtained estimations for the cardinality of the set of points that guarantee its linear separability when the points are drawn randomly, independently and uniformly from a $d$-dimensional spherical layer and from the unit cube. These results give more accurate estimates than the bounds obtained in \cite{GorbanTyukin2017,GorbanBurtonTyukin2019} for Fisher separability.
Here we give even more precise estimations for the number of points in the spherical layer to guarantee their linear separability. Also, we report the results of computational experiments comparing the theoretical estimations for the probability of the linear and Fisher separabilities with the corresponding experimental frequencies and discuss them.

\section{Definitions}


A point $X\in \RR^d$ is {\em linearly separable} from the set $M\subset\RR^d$ if there exists a hyperplane separated $X$ from $M$, i.e. 
there exists $A_X\in\RR^d$ such that $(A_X,X) > (A_X,Y)$ 
for all $Y\in M$.

A point $X\in \RR^d$ is {\em Fisher separable} from the set $M\subset\RR^d$ if the inequality $(X,Y) < (X,X)$ holds for all $Y\in M$ \cite{GorbanGolubkovGrechukMirkesTyukin2018,GorbanGrechukTyukin2018}.

A set of points $\{X_1,\ldots,X_n\}\subset\RR^d$ is called {\em $1$-convex} \cite{BaranyFuredi1988} 
or {\em linearly separable} \cite{GorbanTyukin2017}
if any point $X_i$ is linearly separable from all other points in the set, or, in other words, the set of vertices of their convex hull, $\conv(X_1,\ldots,X_n)$, coincides with $\{X_1,\ldots,X_n\}$.

The set $\{X_1,\ldots,X_n\}$ is called {\em Fisher separable} if $(X_i,X_j) < (X_i,X_i)$ for all $i$, $j$, such that $i\ne j$ 
\cite{GorbanGolubkovGrechukMirkesTyukin2018,GorbanGrechukTyukin2018}.

Fisher separability implies linear separability but not vice versa
(even if the set is centered and normalized to unit variance). 
Thus, if $M\subset\RR^d$ is a random set of points from a certain probability distribution, 
then the probability that $M$ is linearly separable is not less than 
the probability that $M$ is Fisher separable.





Let $B_d=\{X\in\RR^d:~\|X\|\leq 1\}$ be the $d$-dimensional unit ball centered at the origin
($\|X\|$ means Euclidean norm), $rB_d$ is the $d$-dimensional ball of radius $r<1$ centered at the origin. 

Let $M_n=\{X_1,\ldots,X_n\}$ be the set of points chosen randomly, independently, according to the uniform distribution on the spherical layer $B_d\setminus rB_d$. 
Denote by 
$P(d, r, n)$ the probability that $M_n$ is linearly separable, and by $P^{F}(d, r, n)$ the probability that $M_n$ is Fisher separable.

Denote by 
$P_1(d, r, n)$ the probability that a random point chosen according to the uniform distribution on  $B_d\setminus rB_d$ is separable from $M_n$, and by
$P_1^F(d, r, n)$ the probability that a random point is Fisher separable from $M_n$.

\section{Previous works}

In \cite{GorbanTyukin2017} it was shown (among other results) that for all $r$, $\vartheta$, $n$, $d$, where $0<r<1$, $0<\vartheta<1$, $d\in \NN$, if
\begin{equation}\label{eq1}
n<\left(\frac{r}{\sqrt{1-r^2}}\right)^d\left(\sqrt{1+\frac{2\vartheta(1-r^2)^{d/2}}{r^{2d}}}-1\right),
\end{equation}
then $M_n$ is Fisher separable with a probability greater than $1-\vartheta$, i.e. $P^F(d,r,n) > 1-\vartheta$.

The following statements are proved in \cite{GorbanBurtonTyukin2019}.

\begin{itemize}
	\item  For all $r$, where $0<r<1$, and for any $d\in\NN$
\begin{equation}\label{f-PF1}
P_1^F(d,r,n) > (1-r^d)\left(1-\frac{(1-r^2)^{d/2}}{2}\right)^n.
\end{equation}

	\item For all $r$, $\vartheta$, where $0<r<1$, $0<\vartheta<1$, and for sufficiently large $d$, if 
\begin{equation}
\label{f-nF1}
n<\frac{\vartheta}{(1-r^2)^{d/2}}
\end{equation}
then $P^F_1(d,r,n) > 1-\vartheta$.

	\item For all $r$, where $0<r<1$, and for any $d\in\NN$ 
\begin{equation}\label{f-PF}
P^F(d,r,n) >
\left[(1-r^d)\left(1-(n-1)\frac{(1-r^2)^{d/2}}{2}\right)\right]^n.
\end{equation}
	\item For all $r$, $\vartheta$, where $0<r<1$, $0<\vartheta<1$ and for sufficiently large $d$, if 
\begin{equation}\label{f-nF}
n<\frac{\sqrt{\vartheta}}{(1-r^2)^{d/4}}
\end{equation}
then $P^F(d,r,n)>1-\vartheta$.
\end{itemize}

The authors of \cite{GorbanTyukin2017},\cite{GorbanBurtonTyukin2019} formulate their results for linearly separable sets of points, but in fact
in the proofs they used that the sets are only Fisher separable.

Note that all estimates (\ref{eq1})--(\ref{f-nF}) require $0< r <1$ with strong inequality. This means that they are inapplicable for (maybe the most interesting) case $r = 0$.

The both estimates $(\ref{eq1})$, $(\ref{f-nF})$ are exponentially dependent on $d$ for fixed $r$, $\vartheta$ 
and the estimate $(\ref{eq1})$ is weaker than $(\ref{f-nF})$
(see Section \ref{sec-ComparisonOfResults}).


In \cite{SidorovZolotykh2019} it was proved that
if $0\leq r<1,$ $0<\vartheta<1,$  
$$
n<\sqrt{\vartheta 2^{d}(1-r^d)},
$$ 
then $P(d,r,n)>1-\vartheta.$
Here we improve this bound (see Corollary~\ref{th-n}) and also give the estimates for $P_1(d,r,n)$ and $P(d,r,n)$ and compare them with known estimates (\ref{f-PF1}), (\ref{f-PF}) for $P^F_1(d,r,n)$ and $P^F(d,r,n)$.



\section{New results}


The following theorem gives a probability of the linear separability 
of a random point from a random $n$-element set $M_n=\{X_1,\ldots,X_n\}$ in $B_d\setminus rB_d.$ 
The proof uses an approach borrowed from \cite{BaranyFuredi1988}, \cite{Elekes1986}.

\begin{theorem}\label{th-P1}
Let $0\leq r<1$, $d\in\NN$. Then  
\begin{equation}\label{f-P1}
P_1(d,r,n)>1-\frac{n}{2^{d}}.  
\end{equation}
\end{theorem}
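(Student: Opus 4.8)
The plan is to bound the probability that a fresh uniform point $X$ fails to be linearly separable from $M_n$. The point $X$ is \emph{not} linearly separable from $M_n$ precisely when $X$ lies in the convex hull $\conv(X_1,\ldots,X_n)$ — equivalently, when $X$ is \emph{not} a vertex of $\conv(X,X_1,\ldots,X_n)$. So I would write
\begin{equation}\label{f-proofplan-union}
1-P_1(d,r,n)=\Pr\bigl(X\in\conv(X_1,\ldots,X_n)\bigr)\le \Pr\Bigl(\bigcup_{i=1}^{n}\{X\in H_i^-\}\Bigr)\le \sum_{i=1}^{n}\Pr\bigl(X\in H_i^-\bigr),
\end{equation}
where for each $i$ the set $H_i^-$ is the closed halfspace bounded by the hyperplane through the origin with normal $X_i$ that does \emph{not} contain $X_i$ — i.e. $H_i^-=\{Y:(X_i,Y)\le 0\}$. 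The key geometric observation (this is the idea borrowed from B\'ar\'any--F\"uredi and Elekes) is that if $X\in\conv(X_1,\ldots,X_n)$, then $X$ cannot be strictly on the ``far'' side of every such hyperplane; more carefully, one uses that for $X$ to be separated from the $X_i$'s it suffices that some $X_i$ has $(X_i,X)\ge(X_i,X_j)$ for all $j$, and the cheap sufficient condition $(X_i,X)\le 0\le(X_i,X_i)$ handles the complementary event. The cleanest route: $X$ is linearly separable from $M_n$ whenever $X\notin\conv(M_n)$, and a sufficient condition for $X\notin\conv(M_n)$ is that the hyperplane $(X,\cdot)=0$ through the origin has all of $M_n$ on one (closed) side while $X$ is strictly on the other — but since the layer is symmetric this is awkward, so instead I use the standard trick: for each $i$, if $(X_i,X)\le 0$ then the hyperplane with normal $X_i$ through $X_i$ separates... — the point is to arrive at a union over $n$ events each of probability at most $2^{-d}$.

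The main computation is then: for a \emph{fixed} unit-direction $u=X_i/\|X_i\|$ (or fixed $X_i\ne 0$), the probability over the uniform $X$ on $B_d\setminus rB_d$ that $X$ lands in the halfspace $\{Y:(X_i,Y)\le 0\}$ — or rather in the relevant cap-like region — is at most $1/2^{d}$. I would prove this by a volume ratio argument: the region in question is contained in (a translate/rotation of) the half-ball, and more sharply, the event that forces non-separability corresponds to $X$ lying in the intersection of the layer with a halfspace through the origin, \emph{but} the truly binding bound comes from comparing the cap cut off by the hyperplane \emph{through $X_i$} against the whole layer. Writing $f(X_i)=\Pr\bigl((X_i,X)\ge(X_i,X_i)\bigr)$ for the ``bad for $i$'' direction and noting $(X_i,X_i)=\|X_i\|^2\ge r^2$, this probability is the normalized volume of $\{Y\in B_d\setminus rB_d:(X_i,Y)\ge\|X_i\|^2\}\subseteq\{Y\in B_d:(Y,X_i/\|X_i\|)\ge\|X_i\|\}$, a spherical cap at height $\|X_i\|$; its volume relative to $\Vol(B_d\setminus rB_d)=(1-r^d)\Vol(B_d)$ must be shown to be $\le 2^{-d}$. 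One then integrates over $X_i$ (independent of $X$) and the bound survives because it holds pointwise in $X_i$.

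The hard part will be pinning down exactly which halfspace/cap to use so that (a) the containment in \eqref{f-proofplan-union} is actually valid — i.e. the chosen events really do cover the event $\{X\in\conv(M_n)\}$ — and (b) the per-$i$ cap volume is bounded by $2^{-d}$ \emph{uniformly in $r$ and in the location of $X_i$} within the layer. For (b) the worst case is $\|X_i\|$ as small as possible; I expect that the cap $\{Y\in B_d:(Y,e)\ge h\}$ with $h=\|X_i\|\in[r,1]$ has volume at most $\tfrac12(1-h^2)^{d/2}\Vol(B_d)$-type expression, which after dividing by $(1-r^d)\Vol(B_d)$ and using $h\ge r$ needs to come out $\le 2^{-d}$ — this is where the clean factor $2^{-d}$ (rather than something $r$-dependent) should emerge, possibly via the elementary inequality $(1-h^2)^{d/2}\le 2(1-h^d)\cdot 2^{-d}$ or a direct argument that for \emph{any} hyperplane through a point of the layer the ``minority'' side has at most a $2^{-d}$ fraction of the layer's volume. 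Getting that last inequality tight and $r$-free, including the endpoint $r=0$ where $(1-r^d)=1$, is the crux; everything else is a union bound and Fubini over the independent coordinates $X,X_1,\ldots,X_n$.
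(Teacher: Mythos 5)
There is a genuine gap, and it sits exactly where you flagged ``the crux'': both the covering step and the per-event volume bound fail for the regions you chose. The event $\{X\in\conv(M_n)\}$ is \emph{not} contained in $\bigcup_{i}\{(X_i,X)\ge (X_i,X_i)\}$: writing $X=\sum_i\lambda_iX_i$ as a convex combination only yields $(X_i,X)\ge (X,X)=\|X\|^2$ for \emph{some} $i$, not $(X_i,X)\ge\|X_i\|^2$. (Take $X_1,X_2,X_3$ the vertices of an equilateral triangle inscribed in the unit circle and $X$ close to the inner sphere of a thin layer: $X$ is in the hull, yet $(X_i,X)<\|X_i\|^2$ for all $i$.) The condition $(X_i,X)\ge(X_i,X_i)$ is the \emph{Fisher}-type failure event, and bounding its cap gives the $(1-r^2)^{d/2}/2$ factor of (\ref{f-PF1}), not $2^{-d}$; the inequality $(1-h^2)^{d/2}\le 2(1-h^d)2^{-d}$ you hope for is false for small $h$, since a cap at small height occupies nearly half the ball. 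The central halfspace $\{(X_i,Y)\le 0\}$ is even worse: it has probability about $1/2$. So no choice among the objects you list can deliver a per-event probability of $2^{-d}$.

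The missing idea --- the actual content of the B\'ar\'any--F\"uredi/Elekes borrowing --- is to use the correct consequence of $X=\sum_i\lambda_iX_i$, namely $(X_i,X)\ge\|X\|^2$ for some $i$. This inequality says precisely that $X$ lies in the ball $S_i$ having the \emph{segment} $OX_i$ as diameter (center $X_i/2$, radius $\|X_i\|/2\le 1/2$), since $(X_i,X)\ge\|X\|^2$ is equivalent to $\|X-X_i/2\|\le\|X_i\|/2$. Hence $\conv(M_n)\subseteq\bigcup_i S_i$ and $\Vol(S_i)\le\gamma_d 2^{-d}$, which is the clean, $r$-free $2^{-d}$ you were after. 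To make the $r$-dependence cancel against the layer volume, note that $S_i$ contains the ball $T_i$ on the diameter from $O$ to $rX_i/\|X_i\|$ (using $\|X_i\|\ge r$), and $T_i\subseteq rB_d$ with $\Vol(T_i)=\gamma_d(r/2)^d$; therefore the part of $S_i$ lying outside $rB_d$ has volume at most $\gamma_d(1-r^d)2^{-d}$, and dividing by $\Vol(B_d\setminus rB_d)=\gamma_d(1-r^d)$ and summing over $i$ gives $\PP(X\in\conv(M_n))\le n/2^d$. Your union-bound-plus-Fubini skeleton is fine; the proof simply cannot be completed with central halfspaces or with caps cut at height $\|X_i\|$.
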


\begin{proof} A random point $Y$ is linearly separable from $M_n=\{X_1,\ldots,X_n\}$ if and only if $Y\notin \conv(M_n).$ Denote this event by $C.$ 
Thus $P_1(d,r,n)=\PP(C).$
Let us find the upper bound for the probability of the event $\overline{C}.$ 
This event means that the point $Y$ belongs to the convex hull of $M_n.$ 
Since the points in $ M_n $ have the uniform distribution, then the probability of $\overline{C}$ is 
$$
P(\overline{C})=\frac{ \Vol\Bigl(\conv(M_n)\setminus \Bigl(\conv(M_n)\cap rB_d\Bigl)\Bigr)}{\Vol(B_d)-\Vol(rB_d)}.
$$

\begin{figure*}[t!]%
	\centering
	\includegraphics[width=1\textwidth]{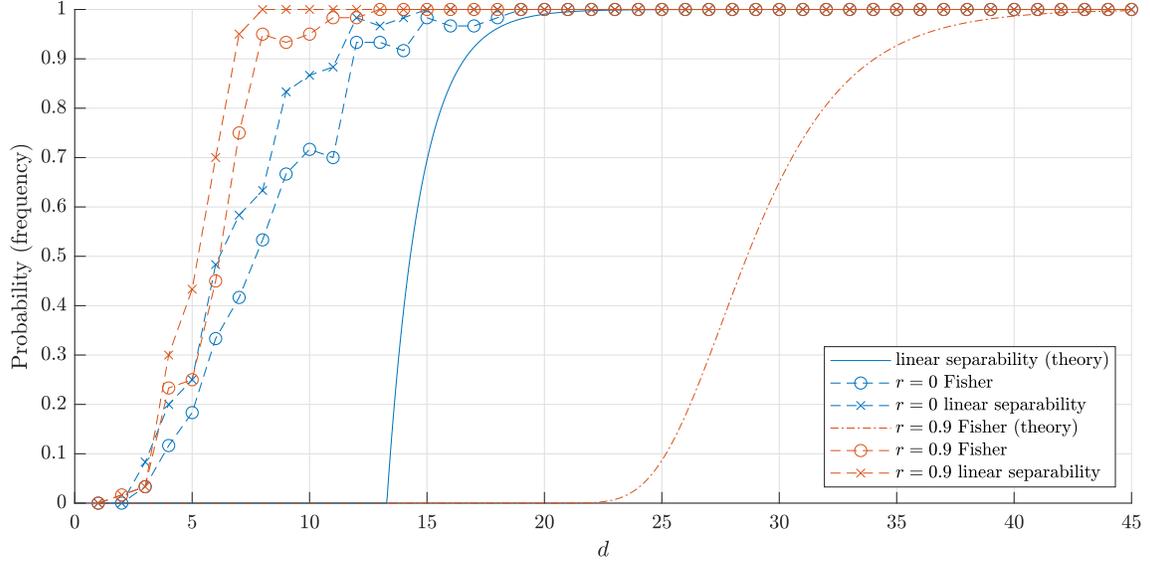}
	\caption{The probability (frequency) that a random point is linearly (or Fisher) separable from a set of $n=10000$ random points in the layer $B_d\setminus r B_d$.
	The blue solid line corresponds to the theoretical bound (\ref{f-P1}) for the linear separability.
	The red dash-dotted line represents the theoretical bound (\ref{f-PF1}) for the Fisher separability.
	The crosses (circles) correspond to the empirical frequencies for linear (and, respectively, Fisher) separability obtained in $60$ trials for each dimension $d$.}
	\label{fig_n1_n10000}
\end{figure*}

Let us estimate the numerator of this fraction. We denote by $S_i$ the ball with center at the origin and with the diameter $OX_i.$  We denote by $T_i$ the ball with center at the origin and with the diameter $r$ inside the ball $S_i.$ Then 

$$\conv(M_n)\setminus (\conv(M_n)\cap rB_d)\subseteq \bigcup\limits_{i=1}^{n}(S_i\setminus T_i)$$ 
and 
$$\Vol\Bigl(\conv(M_n)\setminus \Bigl(\conv(M_n)\cap rB_d\Bigl)\Bigr)\leq \sum\limits_{i=1}^{n}\Vol(S_i\setminus T_i)=$$
$$=\sum\limits_{i=1}^{n}(\Vol(S_i) - \Vol(T_i))=
\sum\limits_{i=1}^{n}\left(\Vol(S_i) - \gamma_d\left(\frac{r}{2}\right)^d\right)\leq$$
$$\leq\sum\limits_{i=1}^{n}\left(\gamma_d \left(\frac{1}{2}\right)^d - \gamma_d \left(\frac{r}{2}\right)^d\right)=\frac{n\gamma_d (1-r^d)}{2^d},$$ 
where $\gamma_d$ is the volume of a ball of radius $1$.

Hence 
$$
\PP(\overline{C})\leq\frac{\frac{n\gamma_d (1-r^d)}{2^d}}{\gamma_d (1-r^d)}=\frac{n}{2^d}
$$ 
and
$$
\PP(C) = 1-\PP(\overline{C})\geq 1-\frac{n}{2^d}.
$$ 
\end{proof}

Note that the bound (\ref{f-P1}) obtained in Theorem \ref{th-P1} doesn't depend on $r$. Nevertheless the bound is quite accurate (in the sense that  \ref{th-P1} shows behaviour close to empirical values.) as is illustrated with Figure \ref{fig_n1_n10000}.
The results of the experiment show that the probabilities $P_1(d,r,n)$ and $P_1^F(d,r,n)$ are quite close and the theoretical bound (\ref{f-P1}) compared with (\ref{f-PF1}) approximates well the both probabilities. 

It is clear that the probabilities must increase monotonously when $d$ increases, but in the real experiment the frequency can not coincide precisely with the probability and it can have non-monotonic behaviour. In our experiment (with $60$ trials for each $d$) it is non-monotonous.

The following corollary gives an improved estimate for the number of points $n$ guaranteeing 
the linear separability of a random point from a random $n$-element set $M_n$ in $B_d\setminus rB_d$ 
with probability at least $1-\vartheta.$

\begin{corollary}
Let $0\leq r<1,$ $0<\vartheta<1,$  
\begin{equation}\label{f-n1}
n<\vartheta 2^{d}.  
\end{equation} 
Then $P_1(d,r,n)>1-\vartheta.$
\end{corollary}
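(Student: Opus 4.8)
The plan is to derive the corollary directly from Theorem~\ref{th-P1} by a one-line substitution. By Theorem~\ref{th-P1} we have the bound $P_1(d,r,n)>1-\dfrac{n}{2^d}$, valid for all $0\le r<1$ and $d\in\NN$, with no dependence on $r$. So it suffices to guarantee that $1-\dfrac{n}{2^d}\ge 1-\vartheta$, i.e.\ that $\dfrac{n}{2^d}\le\vartheta$, which is exactly the hypothesis $n<\vartheta 2^d$ (in fact the strict inequality gives a little room to spare).

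Concretely, the steps are: first, invoke Theorem~\ref{th-P1} to write $P_1(d,r,n)>1-n/2^d$; second, use the assumption $n<\vartheta 2^d$ to conclude $n/2^d<\vartheta$, hence $-n/2^d>-\vartheta$; third, combine these to get $P_1(d,r,n)>1-n/2^d>1-\vartheta$, which is the claim. One should note in passing that the hypotheses $0\le r<1$ and $0<\vartheta<1$ are precisely what is needed for Theorem~\ref{th-P1} to apply and for the statement ``probability $>1-\vartheta$'' to be non-vacuous, but they play no further role.

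There is essentially no obstacle here: the corollary is a trivial rearrangement of the theorem's inequality, solving $1-n/2^d>1-\vartheta$ for $n$. The only thing worth a moment's care is keeping the direction of the inequalities straight when dividing and negating, and observing that because Theorem~\ref{th-P1} already gives a strict inequality, one could even replace $n<\vartheta 2^d$ by $n\le\vartheta 2^d$ and still obtain $P_1(d,r,n)>1-\vartheta$; the authors presumably state it with strict inequality for uniformity with the other estimates in the paper. I would therefore present the proof in two or three lines, with no figures or auxiliary lemmas required.
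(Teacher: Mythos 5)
Your proof is correct and is essentially identical to the paper's own one-line argument: substitute the hypothesis $n<\vartheta 2^d$ into the bound $P_1(d,r,n)>1-n/2^d$ from Theorem~\ref{th-P1}. Your side remark that the strict inequality in the theorem would even permit the weaker hypothesis $n\le\vartheta 2^d$ is also accurate.
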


\begin{proof} 
If $n$ satisfies the condition $n<\vartheta 2^{d}$, then the inequality $P_1(d,r,n)>1-\vartheta$ holds by the previous theorem. 
\end{proof}

\begin{figure*}[t!]%
	\centering
	\includegraphics[width=1\textwidth]{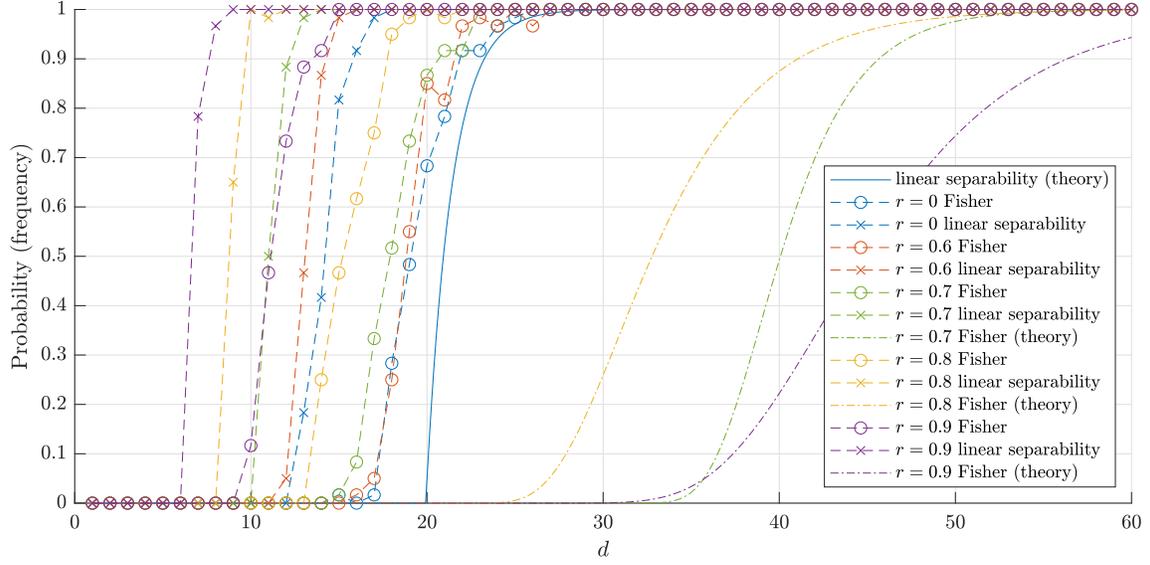}
	\caption{The probability (frequency) that the set of $n=1000$ random points in the layer $B_d\setminus r B_d$ is linearly or Fisher separable.
	The blue solid line corresponds to the theoretical bound (\ref{f-P}) for the linear separability obtained in Theorem~\ref{th-P}.
	The dash-dotted lines represent the theoretical bound (\ref{f-PF}) for the Fisher separability.
	The crosses (circles) correspond to the empirical frequencies for linear (and, respectively, Fisher) separability obtained in $60$ trials for each dimension $d$.}
	\label{fig_n1000}
\end{figure*}

\begin{figure*}[h!]%
	\centering
	\includegraphics[width=1\textwidth]{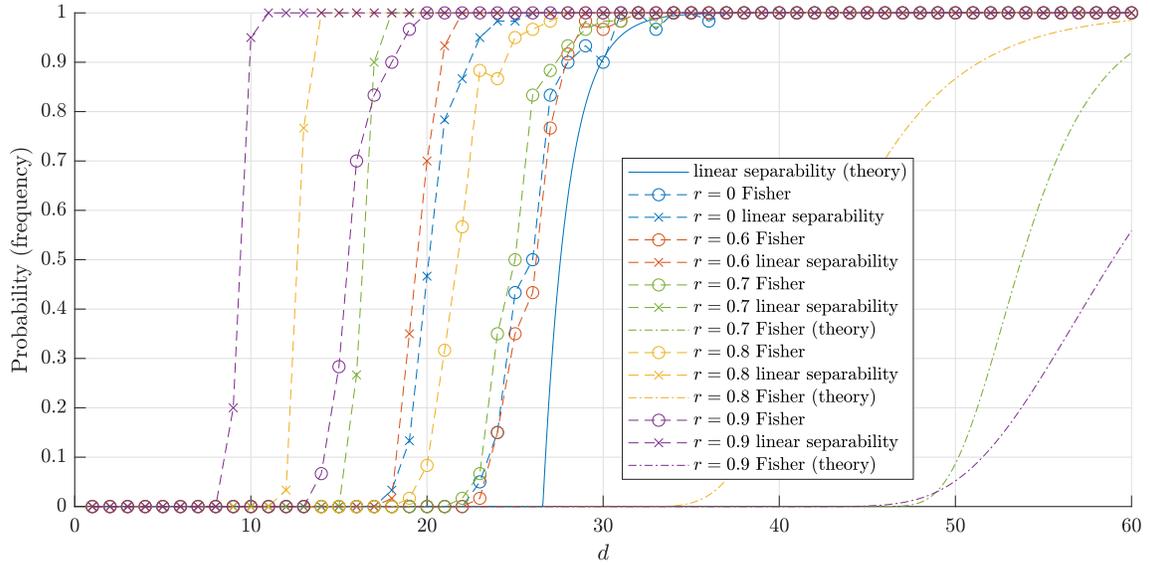}
	\caption{The probability (frequency) that the set of $n=10000$ random points in the layer $B_d\setminus r B_d$ is linearly or Fisher separable.
	The notations are the same as on Figure~\ref{fig_n1000}}
	\label{fig_n10000}
\end{figure*}

The following theorem gives the probability of the linear separability 
of a random $n$-element set $M_n$ in $B_d\setminus rB_d.$

\begin{theorem}\label{th-P}
Let $0\leq r<1$, $d\in\NN$. Then 
\begin{equation}\label{f-P}
P(d,r,n)>1-\frac{n(n-1)}{2^{d}}. 
\end{equation}
\end{theorem}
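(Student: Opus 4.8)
The plan is to reduce the statement about the whole set $M_n$ to the already-established Theorem~\ref{th-P1} about a single point, using the union bound over the $n$ points. Recall that $M_n=\{X_1,\ldots,X_n\}$ is linearly separable precisely when each $X_i$ is a vertex of $\conv(M_n)$, i.e. when $X_i\notin\conv(M_n\setminus\{X_i\})$ for every $i$. So I would let $\overline{C_i}$ denote the (bad) event that $X_i\in\conv(M_n\setminus\{X_i\})$, so that $P(d,r,n)=\PP\bigl(\bigcap_{i=1}^n C_i\bigr)=1-\PP\bigl(\bigcup_{i=1}^n \overline{C_i}\bigr)\geq 1-\sum_{i=1}^n\PP(\overline{C_i})$.

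The key point is then that each $\PP(\overline{C_i})$ is exactly the quantity bounded in the proof of Theorem~\ref{th-P1}, but with $n$ replaced by $n-1$: conditioning on (equivalently, by symmetry, just regarding) the $n-1$ points $\{X_j: j\neq i\}$ as the fixed random cloud and $X_i$ as the fresh uniform point, the event $X_i\in\conv(M_n\setminus\{X_i\})$ is the event $\overline{C}$ analyzed there with an $(n-1)$-element set. The volume argument in that proof — covering $\conv(M_n\setminus\{X_i\})\setminus rB_d$ by the union of the balls $S_j\setminus T_j$ over $j\neq i$ and summing volumes — gives $\PP(\overline{C_i})\leq (n-1)/2^d$. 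Since all $n$ points are i.i.d.\ and exchangeable, this bound holds uniformly over $i$, so $\sum_{i=1}^n\PP(\overline{C_i})\leq n(n-1)/2^d$, which yields $P(d,r,n)>1-\frac{n(n-1)}{2^d}$ as claimed.

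The one place that needs a little care, and which I expect to be the main (though modest) obstacle, is justifying that $\PP(\overline{C_i})$ really equals the single-point probability $P_1(d,r,n-1)$'s complement bound: one should note that in Theorem~\ref{th-P1} the ``$n$ background points'' and the ``test point'' are all drawn i.i.d.\ uniformly from the layer, which is exactly the joint law here of $(\{X_j\}_{j\neq i},X_i)$, so the same volume-ratio computation applies verbatim. After that, everything is just the union bound and the arithmetic $n\cdot(n-1)$. I would close by remarking (as the paper does for Theorem~\ref{th-P1}) that the bound is independent of $r$, and that the corresponding corollary $n<\sqrt{\vartheta}\,2^{d/2}$ giving $P(d,r,n)>1-\vartheta$ follows immediately, improving the earlier bound $n<\sqrt{\vartheta 2^d(1-r^d)}$ from \cite{SidorovZolotykh2019}.
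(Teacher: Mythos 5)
Your proof is correct and follows essentially the same route as the paper: a union bound over the events $\overline{C_i}$ that $X_i\in\conv(M_n\setminus\{X_i\})$, with each $\PP(\overline{C_i})\leq (n-1)/2^d$ obtained from the volume argument of Theorem~\ref{th-P1} applied to the remaining $n-1$ points. The exchangeability remark you add is exactly the (implicit) justification the paper relies on.
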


\begin{proof} Denote by $A_n$ the event that $M_n$ is linearly separable and 
denote by $C_i$ the event that $X_i\notin \conv(M_n\setminus\{X_i\})$ ($i=1,\ldots,n$). Thus $P(d,r,n)=\PP(A_n).$
Clearly $A_n=C_1\cap\ldots\cap C_n$ and $\PP(A_n)=\PP(C_1\cap\ldots\cap C_n)=
1-\PP(\overline{C}_1\cup\ldots\cup \overline{C}_n)\geq 1-\sum\limits_{i=1}^{n}\PP(\overline{C}_i).$ 
Let us find the upper bound for the probability of the event $\overline{C}_i.$ 
This event means that the point $X_i$ belongs to the convex hull of the remaining points, i.e. $X_i\in \conv(M_n\setminus\{X_i\}).$ 
In the proof of the previous theorem, it was shown that 
$$
\PP(\overline{C}_i)\leq\frac{n-1}{2^d} \qquad (i=1,\ldots,n).
$$ 
Hence 
$$
\PP(A_n)\geq 1-\sum\limits_{i=1}^{n}\PP(\overline{C}_i)\geq 1-\frac{n(n-1)}{2^d}.
$$ 
\end{proof}

Note that the bound (\ref{f-P}) obtained in Theorem \ref{th-P} doesn't depend on $r$, although $P(d,r,n)$ seems to increase monotonically with increasing $r$ (for a big enough $n$). Nevertheless the bound is quite accurate as is illustrated with Figures \ref{fig_n1000}, \ref{fig_n10000}.
The results of the experiment show that the probabilities $P_1(d,r,n)$ and $P_1^F(d,r,n)$ are quite close and the theoretical bound (\ref{f-P}) compared with (\ref{f-PF}) approximates well the both probabilities. 

Another important conclusion from the experiment is as follows. Despite the fact that both probabilities $P_F(d,r,n)$ $P(d,r,n)$ are close to $1$ for sufficiently big $d$, the ``threshold values'' for such a sufficiently big $d$ differ greatly. In other words, the blessing of dimensionality when using linear discriminants comes noticeably earlier than if we only use Fisher discriminants. This is achieved at the cost of constructing the usual linear discriminant in comparison with the Fisher one.

The following corollary gives an improved estimate for the number of points $n$ guaranteeing 
the linear separability of a random $n$-element set $M_n$ in $B_d\setminus rB_d$ 
with probability at least $1-\vartheta.$ 
This result strengthens the result obtained in \cite{SidorovZolotykh2019}.

\begin{corollary}\label{th-n}
Let $0\leq r<1,$ $0<\vartheta<1,$  
\begin{equation}\label{f-n}
n<\sqrt{\vartheta 2^{d}}.  
\end{equation}
Then $P(d,r,n)>1-\vartheta.$
\end{corollary}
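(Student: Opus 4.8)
The plan is to derive Corollary~\ref{th-n} directly from Theorem~\ref{th-P}, exactly in the spirit of the proof of the first corollary. Theorem~\ref{th-P} gives the unconditional bound $P(d,r,n) > 1 - \frac{n(n-1)}{2^d}$, so it suffices to show that the hypothesis $n < \sqrt{\vartheta 2^d}$ forces $\frac{n(n-1)}{2^d} \le \vartheta$, which then yields $P(d,r,n) > 1 - \vartheta$.

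First I would square the hypothesis to get $n^2 < \vartheta 2^d$. Then I would observe that $n(n-1) < n^2 < \vartheta 2^d$, hence $\frac{n(n-1)}{2^d} < \vartheta$. Plugging this into the bound from Theorem~\ref{th-P} gives $P(d,r,n) > 1 - \frac{n(n-1)}{2^d} > 1 - \vartheta$, which is the claim. A minor point to note is that the argument only uses $n(n-1) \le n^2$, so the bound is in fact a little loose; one could equivalently phrase the hypothesis as $n(n-1) < \vartheta 2^d$, but the stated square-root form is the clean, easily-checkable version that the authors want to compare with the earlier estimate from~\cite{SidorovZolotykh2019}.

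There is essentially no obstacle here: the corollary is a routine arithmetic consequence of the theorem, and the only thing to be slightly careful about is that $n$ is a positive integer (so $n(n-1) \ge 0$ and the probability bound is nontrivial), and that the chain of inequalities is strict where it needs to be. I would write the proof in two short sentences mirroring the proof of the preceding corollary.

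\begin{proof}
If $n$ satisfies $n < \sqrt{\vartheta 2^d}$, then $n(n-1) < n^2 < \vartheta 2^d$, so $\dfrac{n(n-1)}{2^d} < \vartheta$. By Theorem~\ref{th-P}, $P(d,r,n) > 1 - \dfrac{n(n-1)}{2^d} > 1 - \vartheta$.
\end{proof}
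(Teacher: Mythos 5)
Your proof is correct and follows exactly the same route as the paper's own proof: square the hypothesis, use $n(n-1)<n^2<\vartheta 2^d$, and plug into the bound of Theorem~\ref{th-P}. Nothing further to add.
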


\begin{proof} 
If $n$ satisfies the condition $n<\sqrt{\vartheta 2^{d}}$, then by the previous theorem 
$$
P(d,r,n)>1-\frac{n(n-1)}{2^{d}}>1-\frac{n^2}{2^{d}}>1-\vartheta.$$  
\end{proof}

\section{Comparison of the results}\label{sec-ComparisonOfResults}

The following statement establishes the asymptotics of the bound $(\ref{eq1})$.

\begin{statement} 
Let $g=\left(\frac{r}{\sqrt{1-r^2}}\right)^d\left(\sqrt{1+\frac{2\vartheta(1-r^2)^{d/2}}{r^{2d}}}-1\right),$ $0<r<1,$ $0<\vartheta<1,$ $d\in\NN$. If $r$ and $\vartheta$ are fixed then the following asymptotic estimates hold:

\begin{enumerate}
	\item[\rm 1.] $g\sim\frac{\vartheta}{r^d},$ if  $\sqrt{\frac{\sqrt{5}-1}{2}}<r<1.$

	\item[\rm 2.] $g=\frac{2\vartheta}{r^d(\sqrt{1+2\vartheta}+1)}=
\frac{\sqrt{1+2\vartheta}-1}{r^d}=(\sqrt{1+2\vartheta}-1)(\frac{\sqrt{5}+1}{2})^{d/2},$ if $r=\sqrt{\frac{\sqrt{5}-1}{2}}.$

	\item[\rm 3.] $g\sim\frac{\sqrt{2\vartheta}}{(1-r^2)^{d/4}},$ if $0<r<\sqrt{\frac{\sqrt{5}-1}{2}}.$
\end{enumerate}
\end{statement}

\begin{proof}
We have $$g=\frac{\left(\frac{r}{\sqrt{1-r^2}}\right)^d\frac{2\vartheta(1-r^2)^{d/2}}{r^{2d}}}{\sqrt{1+\frac{2\vartheta(1-r^2)^{d/2}}{r^{2d}}}+1}=
\frac{2\vartheta}{r^d\left(\sqrt{1+2\vartheta\left(\frac{\sqrt{1-r^2}}{r^{2}}\right)^d}+1\right)}.$$ 

If $0<\frac{\sqrt{1-r^2}}{r^{2}}<1$ then $g\sim\frac{\vartheta}{r^d}$.

If $\frac{\sqrt{1-r^2}}{r^{2}}=1$ then $g=\frac{2\vartheta}{r^d(\sqrt{1+2\vartheta}+1)}=
\frac{\sqrt{1+2\vartheta}-1}{r^d}$.

If $\frac{\sqrt{1-r^2}}{r^{2}}>1$ then $g\sim\frac{2\vartheta}{r^d\sqrt{2\vartheta\left(\frac{\sqrt{1-r^2}}{r^{2}}\right)^d}}=\frac{\sqrt{2\vartheta}}{(1-r^2)^{d/4}}$.

The equality $\frac{\sqrt{1-r^2}}{r^{2}}=1$ holds if $r^4+r^2-1=0,$ that is $r^2=\frac{\sqrt{5}-1}{2},$ $r=\sqrt{\frac{\sqrt{5}-1}{2}}.$ The inequality $0<\frac{\sqrt{1-r^2}}{r^{2}}<1$ holds if  $r^4+r^2-1>0,$ that is for $\sqrt{\frac{\sqrt{5}-1}{2}}<r<1.$ The inequality $\frac{\sqrt{1-r^2}}{r^{2}}>1$ holds if   $r^4+r^2-1<0,$ that is for $0<r<\sqrt{\frac{\sqrt{5}-1}{2}}.$
\end{proof}

Let us compare the bound $(\ref{f-nF})$ with the bound $(\ref{eq1})$
proposed in \cite{GorbanTyukin2017}, \cite{GorbanBurtonTyukin2019}.

\begin{corollary} 
Let $f=\frac{\sqrt{\vartheta}}{(1-r^2)^{d/4}},$ $g=\left(\frac{r}{\sqrt{1-r^2}}\right)^d\left(\sqrt{1+\frac{2\vartheta(1-r^2)^{d/2}}{r^{2d}}}-1\right),$ $0<r<1,$ $0<\vartheta<1,$ $d\in\NN.$ If $r$ and  
$\vartheta$ are fixed then the following asymptotic estimates of the quotient $\frac{f}{g}$ hold:

\begin{enumerate}
	\item[\rm 1.] $\frac{f}{g}\sim\frac{1}{\sqrt{\vartheta}}\left(\frac{r^2}{\sqrt{1-r^2}}\right)^{d/2}\rightarrow\infty,$ if  $\sqrt{\frac{\sqrt{5}-1}{2}}<r<1$.
	\item[\rm 2.] $\frac{f}{g}=\frac{\sqrt{1+2\vartheta}+1}{2\sqrt{\vartheta}}>1,$ if $r=\sqrt{\frac{\sqrt{5}-1}{2}}$.
	\item[\rm 3.] $\frac{f}{g}\sim\frac{1}{\sqrt{2}},$ if $0<r<\sqrt{\frac{\sqrt{5}-1}{2}}$.
\end{enumerate}
\end{corollary}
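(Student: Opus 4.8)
The plan is to directly divide $f$ by $g$, substitute the closed form of $g$ obtained in the proof of the preceding Statement, and then read off the asymptotics in each of the three regimes from the behaviour of the quantity $\left(\tfrac{\sqrt{1-r^2}}{r^2}\right)^d$, exactly as was done for $g$ itself. First I would recall from the Statement's proof that
$$
g=\frac{2\vartheta}{r^d\left(\sqrt{1+2\vartheta\left(\frac{\sqrt{1-r^2}}{r^{2}}\right)^d}+1\right)},
$$
so that
$$
\frac{f}{g}=\frac{\sqrt{\vartheta}}{(1-r^2)^{d/4}}\cdot\frac{r^d\left(\sqrt{1+2\vartheta\left(\frac{\sqrt{1-r^2}}{r^{2}}\right)^d}+1\right)}{2\vartheta}
=\frac{r^d\left(\sqrt{1+2\vartheta\left(\frac{\sqrt{1-r^2}}{r^{2}}\right)^d}+1\right)}{2\sqrt{\vartheta}\,(1-r^2)^{d/4}}.
$$
This single identity is the workhorse; everything after it is a case analysis on the base $\frac{\sqrt{1-r^2}}{r^2}$, whose threshold value $1$ corresponds to $r=\sqrt{\frac{\sqrt5-1}{2}}$, already identified in the Statement.

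Next I would treat the three cases in turn. In case~3, where $0<r<\sqrt{\frac{\sqrt5-1}{2}}$, the base exceeds $1$, so $2\vartheta\left(\frac{\sqrt{1-r^2}}{r^2}\right)^d\to\infty$ and the square root is asymptotically $\sqrt{2\vartheta}\left(\frac{\sqrt{1-r^2}}{r^2}\right)^{d/2}=\sqrt{2\vartheta}\,\frac{(1-r^2)^{d/4}}{r^d}$, which dominates the additive $1$; substituting, the factors $r^d$ and $(1-r^2)^{d/4}$ cancel against their counterparts and $\frac{f}{g}\sim\frac{\sqrt{2\vartheta}}{2\sqrt{\vartheta}}=\frac{1}{\sqrt2}$. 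In case~2, where the base equals $1$, the square root is exactly $\sqrt{1+2\vartheta}$, the whole bracket is the constant $\sqrt{1+2\vartheta}+1$, and since $(1-r^2)^{d/4}=r^{d/2}\cdot r^{d/2}$ with $r^2=\frac{\sqrt5-1}{2}$ one checks that $r^d=(1-r^2)^{d/4}$ (because $\sqrt{1-r^2}=r^2$ there), so the remaining powers cancel and $\frac{f}{g}=\frac{\sqrt{1+2\vartheta}+1}{2\sqrt{\vartheta}}$; the inequality $>1$ follows since $\sqrt{1+2\vartheta}+1>2\sqrt{\vartheta}$ is equivalent to $(\sqrt{1+2\vartheta}-1)^2\ge0$ after rearranging, or more simply since $\sqrt{1+2\vartheta}>1>2\sqrt\vartheta-1$ when $\vartheta<1$. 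In case~1, where $\sqrt{\frac{\sqrt5-1}{2}}<r<1$, the base is less than $1$, so the square root tends to $1$, the bracket tends to $2$, and $\frac{f}{g}\sim\frac{2r^d}{2\sqrt\vartheta\,(1-r^2)^{d/4}}=\frac{1}{\sqrt\vartheta}\left(\frac{r^2}{\sqrt{1-r^2}}\right)^{d/2}$, which $\to\infty$ because $r^4>1-r^2$ in this range (the same inequality $r^4+r^2-1>0$ that defines the range).

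The main obstacle, such as it is, is purely bookkeeping: keeping track of which power of $r$ and of $(1-r^2)$ survives the cancellation in each regime, and in the boundary case~2 verifying the algebraic identity $r^d=(1-r^2)^{d/4}$ from $r^4+r^2-1=0$. There is no genuine analytic difficulty — no uniformity issue arises since $r$ and $\vartheta$ are fixed — so the write-up is essentially a transcription of the substitution above followed by the elementary limit $\sqrt{1+x}\to1$ as $x\to0$ and $\sqrt{1+x}\sim\sqrt{x}$ as $x\to\infty$. I would present it as a short computation mirroring the structure of the Statement's proof, invoking that Statement for the closed form of $g$ rather than re-deriving it.
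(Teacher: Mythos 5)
Your proposal is correct and follows essentially the same route as the paper: both divide $f$ by the closed form of $g$ established in the preceding Statement and split into the same three cases according to whether $\bigl(\tfrac{\sqrt{1-r^2}}{r^2}\bigr)^d$ tends to $0$, equals $1$, or tends to $\infty$. (Minor quibble: in case~2 your first justification of $\sqrt{1+2\vartheta}+1>2\sqrt{\vartheta}$ via $(\sqrt{1+2\vartheta}-1)^2\ge 0$ does not actually rearrange to that inequality, but your alternative argument $\sqrt{1+2\vartheta}>1>2\sqrt{\vartheta}-1$ for $\vartheta<1$ is valid, so nothing is lost.)
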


\begin{proof} 
If $\sqrt{\frac{\sqrt{5}-1}{2}}<r<1,$ then $\frac{f}{g}\sim \frac{\sqrt{\vartheta}}{(1-r^2)^{d/4}}\frac{r^d}{\vartheta}=
\frac{1}{\sqrt{\vartheta}}\left(\frac{r^2}{\sqrt{1-r^2}}\right)^{d/2}\rightarrow\infty$ for $d\rightarrow\infty,$ since $\frac{r^2}{\sqrt{1-r^2}}>1$ for $r>\sqrt{\frac{\sqrt{5}-1}{2}}.$

If $r=\sqrt{\frac{\sqrt{5}-1}{2}},$ then $\frac{f}{g}= \frac{\sqrt{\vartheta}}{(1-r^2)^{d/4}}\frac{r^d(\sqrt{1+2\vartheta}+1)}{2\vartheta}=\frac{\sqrt{1+2\vartheta}+1}{2\sqrt{\vartheta}}\left(\frac{r^2}{\sqrt{1-r^2}}\right)^{d/2}=\frac{\sqrt{1+2\vartheta}+1}{2\sqrt{\vartheta}}>1.$ 

If $0<r<\sqrt{\frac{\sqrt{5}-1}{2}},$ then  
$\frac{f}{g}\sim \frac{\sqrt{\vartheta}}{(1-r^2)^{d/4}}\frac{(1-r^2)^{d/4}}{\sqrt{2\vartheta}}=\frac{1}{\sqrt{2}}.$ 
\end{proof}

The following statement compares estimates of the number of points that guarantee 
linear separability of a random points in the spherical layer obtained in \cite{GorbanBurtonTyukin2019} and in Corollary~\ref{th-n}.

\begin{statement}\label{Th_spherical_segment} 
Let $f=\sqrt{\vartheta 2^{d}},$ $g=\frac{\sqrt{\vartheta}}{(1-r^2)^{d/4}},$ $0<r<1,$ $0<\vartheta<1,$ $d\in\NN.$ If $r$ and  
$\vartheta$ are fixed then 
$$\frac{f}{g}\sim (2\sqrt{1-r^2})^{d/2}.$$ 

If $0<r<\frac{\sqrt{3}}{2}$ then $\frac{f}{g}\rightarrow\infty.$

If $r=\frac{\sqrt{3}}{2}$ then $\frac{f}{g}\rightarrow 1.$

If $\frac{\sqrt{3}}{2}< r <1$ then $\frac{f}{g}\rightarrow 0.$

\end{statement} 

\begin{proof} 
$\frac{f}{g} = \frac{\sqrt{\vartheta 2^d}(1-r^2)^{d/4}}{\sqrt{\vartheta}}=
(2\sqrt{1-r^2})^{d/2}$ for $0\leq r <1.$ For $0<r<\frac{\sqrt{3}}{2}$ inequality   $2\sqrt{1-r^2}>1$ holds so $\frac{f}{g}\rightarrow\infty.$
 For $\frac{\sqrt{3}}{2}< r <1$ inequality $2\sqrt{1-r^2}<1$ holds so $\frac{f}{g}\rightarrow 0.$ 
For $r=\frac{\sqrt{3}}{2}$ equality $2\sqrt{1-r^2}=1$ holds so $\frac{f}{g}\rightarrow 1.$

\end{proof}

Let us compare the rate of convergence to $1$ of estimates (\ref{f-PF}) and (\ref{f-P}).

\begin{statement}\label{Th_asymp_prob} 
Let  $g=1-\left[(1-r^d)\left(1-(n-1)\frac{(1-r^2)^{d/2}}{2}\right)\right]^n,$ $0<r<1,$ $d, n\in\NN.$ If $r$ and $n$ are fixed then the following asymptotic estimates hold:

\begin{enumerate}
	\item[\rm 1.] $g\sim n r^d,$ if  $\frac{\sqrt{2}}{2}<r<1.$

	\item[\rm 2.] $g\sim \frac{n(n-1)}{2}(1-r^2)^{d/2},$ if $0<r<\frac{\sqrt{2}}{2}.$
	
	\item[\rm 3.] $g\sim \frac{n(n+1)}{2}\frac{1}{2^{d/2}},$ if $r=\frac{\sqrt{2}}{2}.$
\end{enumerate}
\end{statement} 

\begin{proof} 

Let $y=\sqrt{1-r^2},$ $a=\frac{n-1}{2}.$ Then $g=1-(1-r^d)^n(1-ay^d)^n=1-(1-nr^d+\frac{n(n-1)}{2}x^{2d}-\ldots)(1-nay^d+\frac{n(n-1)}{2}a^2y^{2d}-\ldots)=1-(1-nr^d-nay^d+\frac{n(n-1)}{2}a^2y^{2d}+n^2ar^dy^d+\frac{n(n-1)}{2}r^{2d}+\ldots)=nr^d+nay^d-\frac{n(n-1)}{2}a^2y^{2d}-n^2ar^dy^d-\frac{n(n-1)}{2}r^{2d}+\ldots.$

If $\frac{\sqrt{2}}{2}<r<1,$ then $r>y$ so $g=nr^d\left(1+a(\frac{y}{r})^d-a^3y^d(\frac{y}{r})^d-nay^d-ar^d+\ldots\right)\sim nr^d.$

If $0<r<\frac{\sqrt{2}}{2},$ then $r<y$ so $g=ny^d\left((\frac{r}{y})^d+a-a^3y^d-nar^d-a(\frac{r}{y})^dr^d+\ldots\right)\sim nay^d.$

If $r=\frac{\sqrt{2}}{2},$ then $r=y$ so $g=nr^d(1+a)-na^3r^{2d}-n^2ar^{2d}-nar^{2d}+\ldots\sim nr^d(1+a)=\frac{n(n+1)}{2}r^d=\frac{n(n+1)}{2}\frac{1}{2^{d/2}}.$

\end{proof}

The following statement compares estimates (\ref{f-PF}) and (\ref{f-P}).

\begin{statement}\label{Th_prob_compare} 
Let $f=\frac{n(n-1)}{2^{d}},$ $g=1-\left[(1-r^d)\left(1-(n-1)\frac{(1-r^2)^{d/2}}{2}\right)\right]^n,$ $0<r<1,$ $d, n\in\NN.$ If $r$ and $n$ are fixed then 

\begin{enumerate}
	\item[\rm 1.] $\frac{g}{f}\sim \frac{(2r)^d}{n-1}\rightarrow\infty,$ 
	if  $\frac{\sqrt{2}}{2}<r<1.$

	\item[\rm 2.] $\frac{g}{f}\sim \frac{(4(1-r^2))^{d/2}}{2}\rightarrow\infty,$ if $0<r<\frac{\sqrt{2}}{2}.$
	
	\item[\rm 3.] $\frac{g}{f}\sim \frac{2^{d/2}(n+1)}{2(n-1)}\rightarrow\infty,$ if $r=\frac{\sqrt{2}}{2}.$

\end{enumerate}

\end{statement} 

\begin{proof} 

If $\frac{\sqrt{2}}{2}<r<1,$ then  $g\sim nr^d$ so  $\frac{g}{f}\sim \frac{nr^d}{\frac{n(n-1)}{2^{d}}}= \frac{(2r)^d}{n-1}\rightarrow\infty$ as $2r>\sqrt{2}.$

If $0<r<\frac{\sqrt{2}}{2},$ then  $g\sim \frac{n(n-1)}{2}(1-r^2)^{d/2}$ so 
$\frac{g}{f}\sim \frac{\frac{n(n-1)}{2}(1-r^2)^{d/2}}{\frac{n(n-1)}{2^{d}}}=\frac{(4(1-r^2))^{d/2}}{2} \rightarrow\infty$ as $4(1-r^2)>2.$

If $r=\frac{\sqrt{2}}{2},$ then $g\sim\frac{n(n+1)}{2}\frac{1}{2^{d/2}}$ so 
$\frac{g}{f}\sim \frac{\frac{n(n+1)}{2}\frac{1}{2^{d/2}}}{\frac{n(n-1)}{2^{d}}} = \frac{2^{d/2}(n+1)}{2(n-1)}\rightarrow\infty.$ 

\end{proof}

Thus,  the estimate (\ref{f-P}) tends to $1$ faster than the estimate (\ref{f-PF}) for all $0<r<1.$

\medskip

\section{Conclusion}

In this paper we refined the bounds for the number of points and for the probability in stochastic separation theorems. We gave new bounds for linear separability, when the points are drawn randomly, independently and uniformly from a $d$-dimensional spherical layer or ball.
These results refine some results obtained in \cite{GorbanTyukin2017,GorbanBurtonTyukin2019,SidorovZolotykh2019} and allow us to better understand the  applicability limits of the stochastic separation theorems for high-dimensional data mining and machine learning problems.

One of the main results of the experiment comparing linear and Fisher separabilities is as follows. The blessing of dimensionality when using linear discriminants can come noticeably earlier (for smaller values of $d$) than if we only use \newpage 
\parindent=0pt Fisher discriminants. 
This is achieved at the cost of constructing the usual linear discriminant in comparison with the Fisher one.

\section*{Acknowledgements}

Authors are grateful to A.\,N.\,Gorban for useful discussions.

\end{document}